\newtheorem{theorem}{Theorem}
\newtheorem{lemma}{Lemma}
\newtheorem{proposition}{Proposition}
\theoremstyle{definition}
\newtheorem{conjecture}{Conjecture}
\title{A Series Involving a Product of Four Consecutive Harmonic Numbers}
\author{Wilson J. Chen and Vincent Nguyen}
\keywords{Harmonic number, Euler sum, zeta function, Apéry's constant, irrational, Chowla-Milnor}
\subjclass{40A05, 40C99, 11J72}
\begin{document}
\begin{abstract}
    In correspondence with Goldbach, Euler began investigating series of the form
    $\sum_{k \geq 1} k^{-m}\left(1 + 2^{-n} + \cdots + k^{-n}\right)$, which are known today as Euler sums. For the case where $n=1$ and $m \geq 2$, Euler was able to obtain a closed form in terms of zeta values. We use elementary techniques in the spirit of Euler to evaluate the series $\sum_{k \geq 1} \frac{H_k H_{k+1} H_{k+2} H_{k+3}}{k(k+1)(k+2)(k+3)},$ where $H_k := 1 + \frac{1}{2} + \cdots + \frac{1}{k}$ is the $k$th harmonic number, in terms of zeta values. The closed form is a potential counterexample to a conjecture of Furdui and S{\^\i}nt{\u{a}}m{\u{a}}rian. We relate this problem to conjectures regarding irrationality and $\mathbb{Q}$-linear independence of zeta values.
\end{abstract}

\maketitle

\section{Introduction}
Euler originally investigated the series
$$\sum_{k=1}^\infty \frac{1}{k^m}\left(1 + \frac{1}{2^n} + \cdots + \frac{1}{k^n}\right), \quad (m\in \mathbb{N}\setminus\{1\}, \,n \in \mathbb{N})$$
in \cite{Euler1}. Euler and Goldbach were able to determine that
\begin{equation} \label{EulerGoldbach}
    \sum_{k=1}^\infty \frac{H_k}{k^2} = 2\zeta(3) \quad \text{and} \quad \sum_{k=1}^\infty \frac{H_k}{k^3} = \frac{5}{4}\zeta(4)
\end{equation}
respectively \cite[pp. 238–240]{SharpMathAnal}. Here, $H_k := 1 + \frac{1}{2} + \cdots + \frac{1}{k}$ is the $k$th harmonic number and
\begin{equation}\label{ZetaFunctionDef}
    \zeta(s) = \sum_{k=1}^\infty \frac{1}{k^s}, \quad (\Re(s) > 1)
\end{equation}
denotes the classical Riemann zeta function. The sums Euler and Goldbach derived are specific cases of the formula
\begin{equation} \label{OriginalEulerSum}
    \sum_{k=1}^\infty \frac{H_k}{k^n} = \left(\frac{n}{2} + 1\right)\zeta(n+1) - \frac{1}{2}\sum_{k=1}^{n-2} \zeta(n-k)\zeta(k+1), \quad (n \in \mathbb{N}\setminus\{1\}).
\end{equation}
The empty sum that arises at $n=2$ is understood to be nil. We mention that \eqref{OriginalEulerSum} is due to Euler \cite{ExplicitEulerSum, AssociatedFun}. Numerous results pertaining to Euler sums can be found in \cite{ExplicitEulerSum, FlajoletEuler, Sofo, LogIntHarmSeriesBook, Xu,XuEuler, XuEulerSums2, XuEulerSums3, XuEulerSums4}. Euler sums are related to the multiple zeta function, defined as
$$\zeta(s_1, \ldots, s_r) = \sum_{n_1 > \cdots > n_r \geq 1} \frac{1}{n_1^{s_1} \cdots n_r^{s_r}},$$
which has been studied in more recent times for its significance in diverse areas of mathematics \cite{CombAlgMZV,AlgebraMZV, ZhaoMZV}. Note that when $s_1, \ldots, s_r$ are naturals with $s_1>1$, the result is an Euler sum.

Many years later, an undergraduate student at the University of Waterloo named Enrico Au-Yeung rediscovered and conjectured the following value for the following quadratic sum:
\begin{equation} \label{Au-Yeung}
    \sum_{k=1}^\infty \left(\frac{H_k}{k}\right)^2 = \frac{17}{4}\zeta(4).
\end{equation}
David Borwein and Jonathan Borwein would eventually verify the result using techniques involving Fourier series and Parseval's theorem in \cite{Borwein}. A more elementary proof is provided by V{\u{a}}lean and Furdui in \cite{FurduiValean}. In deference to Au-Yeung, the series is called the quadratic series of Au-Yeung. The cubic version of Au-Yeung's series was computed in \cite{Valean3} to be
$$\sum_{k=1}^\infty \left(\frac{H_k}{k}\right)^3 = \frac{93}{16}\zeta(6) - \frac{5}{2}\zeta^2(3).$$
One would expect that the quartic analogue can also be expressed zeta values. However, it was determined in \cite{EulerStirling} that
$$\sum_{k=1}^\infty \left(\frac{H_k}{k}\right)^4 = \frac{13559}{144}\zeta(8) - 92\zeta(3)\zeta(5) -2\zeta(2)\zeta^2(3) + 26\texttt{S}_{2,6}$$
where
$$\texttt{S}_{m,n} := \sum_{k=1}^\infty \frac{1}{k^n}\left(1 + \frac{1}{2^m} + \cdots + \frac{1}{k^m}\right), \quad (m \in \mathbb{N}, n\in \mathbb{N}\setminus\{1\}),$$
which is an Euler sum not known to be reducible to zeta values when $m=2$ and $n=6$.

Furdui and S{\^\i}nt{\u{a}}m{\u{a}}rian considered a different type of series involving products of harmonic numbers; rather than raising $\frac{H_k}{k}$ to an integer power, they considered series involving the consecutive product $\frac{H_k H_{k+1} \cdots H_{k+q}}{k(k+1)\cdots(k+q)}$ for some integer $q \geq 1$. They were able to compute the following series in closed form:
\begin{equation} \label{Consecutive2}
    \sum_{k=1}^\infty \frac{H_k H_{k+1}}{k(k+1)} = \zeta(2) + 2\zeta(3)
\end{equation}
and
\begin{equation} \label{Consecutive3}
    \sum_{k=1}^\infty \frac{H_k H_{k+1} H_{k+2}}{k(k+1)(k+2)} = -\frac{1}{2}\zeta(2) + \frac{5}{4}\zeta(3) + \frac{5}{8}\zeta(4).
\end{equation}
A generalization of \eqref{Consecutive2} is provided in \cite{FurduiProduct2Series}:
\begin{align*}
    \sum_{k=1}^\infty \frac{H_k H_{k+\ell}}{k(k+\ell)}
    =& \sum_{i=0}^{\ell-1} (-1)^i \binom{\ell-1}{i}\frac{1}{(i+1)^3}\left(\frac{3}{i+1} - \frac{2}{\ell}\right)\\
    &+ \frac{\pi^2}{6}\frac{H_\ell}{\ell} + \frac{2\zeta(3)}{\ell} - \frac{1}{\ell}\sum_{j=1}^{\ell} \frac{H_j}{j^2},\quad (\ell \geq 1).
\end{align*}
In \cite[p. 119]{SharpMathAnal}, \eqref{Consecutive2} is computed and \eqref{Consecutive3} is left as an open problem. In \cite{FurduiGazetaMat}, \eqref{Consecutive3} is computed. In the very same article, Furdui and S{\^\i}nt{\u{a}}m{\u{a}}rian make the following conjecture:
\begin{conjecture}\label{FurduiSintConjecture}
    If $q \geq 1$ is an integer, then
    $$\sum_{k=1}^\infty \frac{H_k H_{k+1}\cdots H_{k+q}}{k(k+1)\cdots (k+q)} = \alpha_{q,2}\zeta(2) + \alpha_{q, 3}\zeta(3) + \cdots + \alpha_{q, q+2} \zeta(q+2)$$
    where $\alpha_{q,2}, \alpha_{q,3}, \ldots, \alpha_{q, q+2}$ are rationals.
\end{conjecture}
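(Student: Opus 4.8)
The plan is to prove the conjecture by induction on $q$, taking the cases $q=1$ and $q=2$ from the known evaluations \eqref{Consecutive2} and \eqref{Consecutive3}. The engine of the induction is a summation-by-parts (Abel) reduction that lowers the number of harmonic factors in the numerator by one at each stage, ultimately expressing $S_q := \sum_{k\ge1} \frac{H_k H_{k+1}\cdots H_{k+q}}{k(k+1)\cdots(k+q)}$ through linear Euler sums, which Euler's formula \eqref{OriginalEulerSum} evaluates in terms of zeta values.

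First I would set $c_k := H_k H_{k+1}\cdots H_{k+q}$ and $P_k := \frac{1}{k(k+1)\cdots(k+q-1)}$, and use the telescoping partial fraction
$$\frac{1}{k(k+1)\cdots(k+q)} = \frac1q\left(P_k - P_{k+1}\right).$$
Abel summation then yields
$$S_q = \frac1q\left(c_1 P_1 + \sum_{k\ge2}(c_k - c_{k-1})P_k\right),$$
the boundary term at infinity vanishing since $c_N P_{N+1} = O\big((\log N)^{q+1} N^{-q}\big) \to 0$. Because consecutive products share all but one factor,
$$c_k - c_{k-1} = H_k H_{k+1}\cdots H_{k+q-1}\Big(H_{k+q}-H_{k-1}\Big) = H_k\cdots H_{k+q-1}\sum_{i=0}^{q}\frac{1}{k+i},$$
so the reduced sum involves only the $q$ consecutive factors $H_k,\ldots,H_{k+q-1}$ multiplied by rational functions of $k$.

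Next I would decompose each resulting summand by partial fractions. The contributions in which the denominator factors are distinct recombine, once the telescoping is accounted for, into the lower series $S_{q-1}$ together with finitely many companion series of the same shape but with a shifted or extra denominator factor; each of these is brought under the inductive hypothesis by the same Abel reduction. The contributions carrying a repeated denominator factor $(k+i)^2$ are the ones that matter: after a further Abel step they produce genuine nonlinear Euler sums of the shape $\sum_{k} \frac{H_k^{\,a} \cdots}{k^{\,b}}$. I would attempt to reduce these using the known quadratic and cubic evaluations — the Au-Yeung series \eqref{Au-Yeung} and its cubic analogue — together with the symmetry and shuffle relations among Euler sums.

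The hard part will be exactly this last step: controlling the nonlinear Euler sums that the repeated-factor terms generate. The relevant weight-$4$ and weight-$5$ nonlinear sums are reducible to zeta values, but the reduction also throws off \emph{products} of zeta values such as $\zeta(2)\zeta(3)$, and the conjecture demands that every such product ultimately collapse into a rational combination of the single values $\zeta(2),\ldots,\zeta(q+2)$. Establishing that the particular combination forced by the consecutive-product structure does collapse in this way — rather than leaving an irreducible remainder — is the decisive obstacle, and it is precisely the point at which the problem makes contact with the questions of irrationality and $\mathbb{Q}$-linear independence of zeta values (Chowla–Milnor) flagged in the introduction.
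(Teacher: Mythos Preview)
The statement you are attempting to prove is labeled \emph{Conjecture} in the paper, and the paper offers no proof of it. On the contrary, the paper's main result (Theorem~\ref{MainProposition}) computes the $q=3$ case explicitly and obtains
\[
\sum_{k=1}^\infty \frac{H_k H_{k+1} H_{k+2} H_{k+3}}{k(k+1)(k+2)(k+3)} = -\tfrac{4}{9}\zeta(2)-\tfrac{1}{6}\zeta(2)\zeta(3)-\tfrac{7}{24}\zeta(3)+\tfrac{191}{144}\zeta(4)+\tfrac{1}{2}\zeta(5),
\]
which contains the cross-term $\zeta(2)\zeta(3)$. The authors then argue that this is a \emph{potential counterexample}: the conjecture holds at $q=3$ if and only if $\zeta(2)\zeta(3)\in\mathrm{span}_{\mathbb{Q}}\{\zeta(2),\zeta(3),\zeta(4),\zeta(5)\}$, a statement they explicitly tie to open problems on the irrationality and $\mathbb{Q}$-linear independence of zeta values (the Chowla--Milnor circle of ideas).

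Your own final paragraph identifies this exact obstruction: you note that the reduction throws off products such as $\zeta(2)\zeta(3)$ and that ``establishing that the particular combination \ldots\ does collapse in this way \ldots\ is the decisive obstacle.'' That is not a technical wrinkle to be ironed out later; it is the entire content of the conjecture. Your Abel-summation/partial-fraction machinery is essentially the same decomposition strategy the paper uses in Lemma~\ref{ExpandConsecutiveProd}, and when carried through for $q=3$ it \emph{will} leave the $\zeta(2)\zeta(3)$ term standing --- that is precisely what the paper computes. So the proposal is not a proof sketch with a gap; it is a sketch whose acknowledged gap is equivalent to the statement being ``proved,'' and the paper's evidence points toward that gap being unbridgeable.
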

We note that Conjecture \ref{FurduiSintConjecture} possibly fails when $q=3$, as the series might not be expressible as a linear combination of zeta values $\zeta(2), \zeta(3), \zeta(4), \zeta(5)$ over the rationals. To illustrate this, we use elementary methods to directly show that
$$\sum_{k=1}^\infty \frac{H_k H_{k+1} H_{k+2} H_{k+3}}{k(k+1)(k+2)(k+3)} = -\frac{4}{9} \zeta (2)-\frac{1}{6}\zeta (2) \zeta (3)-\frac{7}{24} \zeta (3)+\frac{191}{144} \zeta (4)+\frac{1}{2}\zeta (5).$$

\section{Proof of the Main Result}
We require some lemmas.
\begin{lemma} \label{ExpandConsecutiveProd}
    Let
    $$\mathscr{H}_k := \frac{H_k H_{k+1} H_{k+2} H_{k+3}}{k(k+1)(k+2)(k+3)}.$$
    Then we have
    \begin{align*}
        \mathscr{H}_k
        =& \frac{11}{24 (k+1)^2} - \frac{131}{72 (k+2)^2} + \frac{11}{12(k+3)^2}- \frac{5}{9 (k+1)^3} + \frac{109}{72 (k+2)^3} \\
        &- \frac{41}{36 (k+3)^3} + \frac{1}{6 (k+1)^4} - \frac{5}{12(k+2)^4} + \frac{11}{36(k+3)^4} + \frac{11 H_k}{24 k} \\
        &-\frac{41 H_{k+1}}{18 (k+1)} + \frac{197 H_{k+2}}{72 (k+2)} - \frac{11 H_{k+3}}{12 (k+3)} + \frac{73 H_k^2}{72 k} - \frac{11 H_{k+1}^2}{3(k+1)} \\
        &+\frac{97 H_{k+2}^2}{24 (k+2)} - \frac{25 H_{k+3}^2}{18 (k+3)} + \frac{13 H_k^3}{18 k} - \frac{13 H_{k+1}^3}{6 (k+1)} + \frac{13 H_{k+2}^3}{6 (k+2)} -\frac{13 H_{k+3}^3}{18 (k+3)} \\
        &+ \frac{H_k^4}{6 k} - \frac{H_{k+1}^4}{2 (k+1)} + \frac{H_{k+2}^4}{2 (k+2)} - \frac{H_{k+3}^4}{6 (k+3)} +\frac{9 H_{k+1}}{4 (k+1)^2} - \frac{29 H_{k+2}}{6 (k+2)^2} \\
        &+ \frac{91 H_{k+3}}{36 (k+3)^2} - \frac{2 H_{k+1}}{3 (k+1)^3} + \frac{13 H_{k+2}}{6 (k+2)^3} - \frac{4 H_{k+3}}{3 (k+3)^3} + \frac{H_{k+3}}{6 (k+3)^4} \\
        &+ \frac{9 H_{k+1}^2}{4 (k+1)^2} - \frac{15 H_{k+2}^2}{4 (k+2)^2} + \frac{7 H_{k+3}^2}{4 (k+3)^2} +\frac{H_{k+2}^2}{2(k+2)^3} - \frac{H_{k+3}^2}{2(k+3)^3} \\
        &+ \frac{H_{k+1}^3}{2 (k+1)^2} - \frac{H_{k+2}^3}{(k+2)^2} + \frac{H_{k+3}^3}{2 (k+3)^2}.
    \end{align*}
\end{lemma}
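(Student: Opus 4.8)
The plan is to rewrite $\mathscr H_k$ as a finite $\mathbb Q$-linear combination of ``atoms'' of the two shapes $\frac{1}{(k+j)^m}$ and $\frac{H_{k+j}^{\,p}}{(k+j)^m}$ with $0\le j\le 3$ and $1\le m,p\le 4$, these being exactly the terms whose sums over $k\ge 1$ we can later evaluate by telescoping or as classical Euler sums. The natural first step is the partial fraction identity
\[
\frac{1}{k(k+1)(k+2)(k+3)}=\frac16\!\left(\frac1k-\frac{3}{k+1}+\frac{3}{k+2}-\frac{1}{k+3}\right),
\]
which splits $\mathscr H_k$ into four pieces, the $j$th being a constant ($\tfrac16$, $-\tfrac12$, $\tfrac12$, $-\tfrac16$ respectively) times $\frac{1}{k+j}H_kH_{k+1}H_{k+2}H_{k+3}$.

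In the piece attached to $\frac1{k+j}$ I would recenter every harmonic number at the index $k+j$ via the telescoping relation $H_{k+i}=H_{k+j}+\varepsilon\sum_\ell\frac{1}{k+\ell}$, where $\varepsilon=\operatorname{sgn}(i-j)$ and $\ell$ runs over the integers from $\min(i,j)+1$ to $\max(i,j)$. After this substitution, $H_kH_{k+1}H_{k+2}H_{k+3}$ becomes a degree-$4$ polynomial in $H_{k+j}$ whose coefficients are explicit rational functions of $k$; the leading coefficient is constant and yields the atoms $\frac{H_k^4}{6k}$, $-\frac{H_{k+1}^4}{2(k+1)}$, $\frac{H_{k+2}^4}{2(k+2)}$, $-\frac{H_{k+3}^4}{6(k+3)}$, while the coefficient of $H_{k+j}^{\,p}$ for $p\le 3$ is a rational function supported on denominators $k+\ell$. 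Decomposing each such rational function into partial fractions produces terms $\frac{H_{k+j}^{\,p}}{(k+\ell)^m}$, and whenever $\ell\ne j$ one recenters once more, $H_{k+j}=H_{k+\ell}\pm(\text{telescoping sum})$, which turns such a term into a matching-index atom plus a purely rational atom of strictly lower harmonic degree (plus more rational functions to be partial-fractioned). Since each recentering strictly lowers the power of the harmonic number that remains, the unwinding terminates, and collecting like atoms over the four pieces yields the asserted formula.

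The sole obstacle is the bulk of the bookkeeping, which I would contain by processing the harmonic-degree layers $p=4,3,2,1,0$ in that order and maintaining a running coefficient for each atom. As a safeguard against arithmetic slips I would also (i) check the claimed identity numerically at, say, $k=1,\dots,6$, and (ii) multiply both sides by $k(k+1)(k+2)(k+3)$, expand each side as a polynomial in $H_k$ using $H_{k+i}=H_k+\sum_{\ell=1}^{i}\frac{1}{k+\ell}$, and verify that the coefficient of every power $H_k^{\,r}$ is the same rational function of $k$ on both sides; this is conclusive because $1,H_k,H_k^2,H_k^3,H_k^4$ are linearly independent over $\mathbb Q(k)$, as one sees by comparing growth rates as $k\to\infty$.
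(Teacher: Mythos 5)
Your proposal is correct and follows essentially the same route as the paper: the same partial fraction split of $\frac{1}{k(k+1)(k+2)(k+3)}$, followed by recentering each harmonic number at the index matching the denominator, expanding, and iterating partial fractions until every term is a matching-index atom. Your added termination argument (each recentering strictly lowers the harmonic degree of the unmatched terms) and the verification via linear independence of $1,H_k,\dots,H_k^4$ over $\mathbb{Q}(k)$ are sensible refinements of the paper's sketch, not a different method.
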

The proof relies on repeatedly applying partial fraction decomposition and the well-known relation $H_{k+q} = H_{k+p} + \frac{1}{k + p+1} + \frac{1}{k + p + 2} + \cdots + \frac{1}{k + q}$ where $q > p \geq 0$ are integers.
\begin{proof}
    Our goal is to expand $\mathscr{H}_k$ to express it as a rational linear combination of terms of the form $$\frac{H_{k+m}^\ell}{(k+m)^n}, \quad (\ell, m \in \mathbb{N}\cup\{0\}, n \in \mathbb{N}).$$
    We begin by applying partial fraction decomposition:
    \begin{align*}
        \mathscr{H}_k
        =& H_k H_{k+1} H_{k+2} H_{k+3}\left(\frac{1}{6 k} - \frac{1}{2 (k+1)} + \frac{1}{2 (k+2)} - \frac{1}{6 (k+3)}\right)\\
        =& \frac{H_k H_{k+1} H_{k+2} H_{k+3}}{6k} - \frac{H_k H_{k+1} H_{k+2} H_{k+3}}{2 (k+1)} + \frac{H_k H_{k+1} H_{k+2} H_{k+3}}{2 (k+2)}\\
        &- \frac{H_k H_{k+1} H_{k+2} H_{k+3}}{6 (k+3)}.
    \end{align*}
    We direct our attention to $\frac{H_k H_{k+1} H_{k+2} H_{k+3}}{6k}$:
    \begin{align*}
        &\frac{H_k H_{k+1} H_{k+2} H_{k+3}}{6k}\\
        =& \frac{H_k\left(H_k + \frac{1}{k+1}\right)\left(H_k + \frac{1}{k+1} + \frac{1}{k+2}\right)\left(H_k + \frac{1}{k+1} + \frac{1}{k+2} + \frac{1}{k+3}\right)}{6k}\\
        =& \frac{H_k^4}{6 k} + \frac{H_k^3}{2 k (k+1)} +  \frac{H_k^3}{3 k (k+2)} +  \frac{H_k^3}{6 k (k+3)} + \frac{H_k^2}{2k (k+1)^2} + \frac{H_k^2}{6k (k+2)^2}\\
        &+ \frac{2 H_k^2}{3 k (k+1)(k+2)} + \frac{H_k^2}{3 k (k+1)(k+3)} + \frac{H_k^2}{6 k (k+2)(k+3)}\\
        &+ \frac{H_k}{6 k (k+1)^3} + \frac{H_k}{6k (k+1)(k+2)^2} + \frac{H_k}{3k (k+1)^2 (k+2)}\\
        & +\frac{H_k}{6k (k+1)^2 (k+3)}+ \frac{H_k}{6 k(k+1)(k+2)(k+3)}.
    \end{align*}
    We again apply partial fraction decomposition to every single term:
    \begin{align*}
        \frac{H_k H_{k+1} H_{k+2} H_{k+3}}{6k}
        =& \frac{11 H_k}{24 k}-\frac{11 H_k}{24 (k+1)} +\frac{H_k}{24 (k+2)} -\frac{H_k}{24 (k+3)}\\
        &-\frac{7 H_k}{12 (k+1)^2}+\frac{H_k}{12 (k+2)^2} -\frac{H_k}{6 (k+1)^3} +\frac{73 H_k^2}{72 k}\\
        &-\frac{4 H_k^2}{3 (k+1)}+\frac{5 H_k^2}{24 (k+2)} +\frac{H_k^2}{9 (k+3)} -\frac{H_k^2}{2 (k+1)^2}\\
        &-\frac{H_k^2}{12 (k+2)^2}+ \frac{13 H_k^3}{18 k}-\frac{H_k^3}{2 (k+1)}-\frac{H_k^3}{6 (k+2)}\\
        &-\frac{H_k^3}{18 (k+3)} + \frac{H_k^4}{6k}.
    \end{align*}
    If there is a term of the form $\frac{H^\ell_{k+p}}{(k+q)^n}$ where $p=q$ then we are done. If $p>q$, we substitute $H_{k+p}=H_{k+q} + \frac{1}{k+q+1} + \frac{1}{k+q+2} + \cdots + \frac{1}{k+p}$. If $p<q$, we substitute $H_{k+p}=H_{k+q} - \frac{1}{k+q} - \frac{1}{k+q-1} - \cdots - \frac{1}{k+p+1}$. After this substitution, expand the new expression and apply partial fraction decomposition. The result may again contain some terms where $p\neq q$, in which case we repeat our substitution of $H_{k+p}$. We achieve our goal by iterating this process. We apply the same principles to $\frac{H_k H_{k+1} H_{k+2} H_{k+3}}{2 (k+1)},$ $ \frac{H_k H_{k+1} H_{k+2} H_{k+3}}{2 (k+2)},$ and $\frac{H_k H_{k+1} H_{k+2} H_{k+3}}{6 (k+3)}$, eventually giving us the desired result. The tedious calculation details are omitted for the sake of brevity.
\end{proof}
We also require the following lemma:
\begin{lemma} \label{ReIndexZetaLemma}
    The following equalities hold:
    \begin{align*}
        &\textit{(a)}\quad \mathcal{S}_1 := \sum_{k=1}^\infty \left(\frac{11}{2 (k+1)^2} - \frac{131}{6 (k+2)^2} + \frac{11}{(k+3)^2}\right) = \frac{491}{72} - \frac{16}{3}\zeta(2),\\
        &\textit{(b)}\quad \mathcal{S}_2 := \sum_{k=1}^\infty \left(- \frac{5}{(k+1)^3} + \frac{109}{8 (k+2)^3} -\frac{41}{4 (k+3)^3}\right) = \frac{2735}{1728}-\frac{13}{8}\zeta(3),\\
        &\textit{(c)}\quad \mathcal{S}_3 := \sum_{k=1}^\infty \left(\frac{1}{(k+1)^4} - \frac{5}{2 (k+2)^4} + \frac{11}{6 (k+3)^4}\right) = -\frac{611}{1944} + \frac{1}{3}\zeta(4).
    \end{align*}
\end{lemma}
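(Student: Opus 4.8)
The plan is to reduce each of the three series to a finite rational combination of a single zeta value plus a rational correction, using nothing more than the elementary reindexing identity
$$\sum_{k=1}^{\infty} \frac{1}{(k+m)^{s}} = \zeta(s) - \sum_{j=1}^{m} \frac{1}{j^{s}}, \qquad (s \geq 2,\ m \in \mathbb{N}),$$
which follows at once from \eqref{ZetaFunctionDef} by the substitution $k \mapsto k-m$ together with the fact that, for $s \geq 2$, the series on the left is absolutely convergent, so that deleting its first $m$ terms is legitimate.

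For part (a), I would write $\mathcal{S}_1$ as a sum of three absolutely convergent series and apply the identity with $s = 2$ and $m = 1, 2, 3$; the relevant partial sums are $\sum_{j=1}^{1} j^{-2} = 1$, $\sum_{j=1}^{2} j^{-2} = \tfrac{5}{4}$, and $\sum_{j=1}^{3} j^{-2} = \tfrac{49}{36}$. Collecting the coefficient of $\zeta(2)$ yields $\tfrac{11}{2} - \tfrac{131}{6} + 11 = -\tfrac{16}{3}$, and the rational part is $-\tfrac{11}{2} + \tfrac{131}{6}\cdot\tfrac{5}{4} - 11\cdot\tfrac{49}{36} = \tfrac{491}{72}$, as claimed. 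Parts (b) and (c) are handled in exactly the same way: for (b) one uses $s = 3$ with $\sum_{j=1}^{1} j^{-3} = 1$, $\sum_{j=1}^{2} j^{-3} = \tfrac{9}{8}$, $\sum_{j=1}^{3} j^{-3} = \tfrac{251}{216}$, so that the coefficient of $\zeta(3)$ is $-5 + \tfrac{109}{8} - \tfrac{41}{4} = -\tfrac{13}{8}$ and the constant simplifies to $\tfrac{2735}{1728}$; for (c) one uses $s = 4$ with $\sum_{j=1}^{1} j^{-4} = 1$, $\sum_{j=1}^{2} j^{-4} = \tfrac{17}{16}$, $\sum_{j=1}^{3} j^{-4} = \tfrac{1393}{1296}$, so that the coefficient of $\zeta(4)$ is $1 - \tfrac{5}{2} + \tfrac{11}{6} = \tfrac{1}{3}$ and the constant simplifies to $-\tfrac{611}{1944}$.

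There is no genuine obstacle here; the only step that deserves a word of justification is the legitimacy of termwise reindexing and of splitting each $\mathcal{S}_i$ into its constituent series, which is immediate from absolute convergence since every exponent appearing is at least $2$. Everything else is the routine arithmetic of combining a handful of rationals, which I would simply carry out and verify against the stated right-hand sides.
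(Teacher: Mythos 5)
Your proposal is correct and follows essentially the same route as the paper: both split each $\mathcal{S}_i$ into its three absolutely convergent constituent series, re-index each to recover $\zeta(s)$ minus finitely many initial terms, and collect coefficients (the paper carries out only part (a) explicitly and omits (b) and (c) as similar). Your arithmetic for the $\zeta$-coefficients and the rational constants checks out in all three parts.
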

\begin{proof}
    \textit{(a)} By re-indexing, we have
    \begin{align*}
        \mathcal{S}_1
        =& \frac{11}{2}\sum_{k=2}^\infty \frac{1}{k^2} - \frac{131}{6}\sum_{k=3}^\infty \frac{1}{k^2} + 11\sum_{k=4}^\infty \frac{1}{k^2}\\
        =& \frac{11}{2}\left(-\frac{1}{1^2} + \sum_{k=1}^\infty \frac{1}{k^2}\right) - \frac{131}{6}\left(-\frac{1}{1^2} - \frac{1}{2^2} + \sum_{k=1}^\infty \frac{1}{k^2}\right)\\
        &+ 11\left(-\frac{1}{1^2} - \frac{1}{2^2} - \frac{1}{3^2} + \sum_{k=1}^\infty \frac{1}{k^2}\right)\\
        =& \frac{491}{72} - \frac{16}{3}\sum_{k=1}^\infty \frac{1}{k^2}\\
        =& \frac{491}{72} - \frac{16}{3}\zeta(2).
    \end{align*}

    \noindent
    \textit{(b)}, \textit{(c)} The calculation details of $\mathcal{S}_2$ and $\mathcal{S}_3$ are similar to that of $\mathcal{S}_1$. Thus, we omit these details for the sake of brevity.
\end{proof}

\begin{lemma} \label{TelescopingLemma}
The following equalities hold:
\begin{align*}
    &\textit{(a)}\quad \mathcal{S}_4 := \sum_{k=1}^\infty \left(\frac{11 H_k}{4 k} -\frac{41 H_{k+1}}{3 (k+1)} + \frac{197 H_{k+2}}{12 (k+2)} - \frac{11 H_{k+3}}{2 (k+3)}\right) = -\frac{299}{144},\\
    &\textit{(b)}\quad \mathcal{S}_5 := \sum_{k=1}^\infty \left(\frac{73 H_k^2}{24 k} - \frac{11 H_{k+1}^2}{k+1} +\frac{97 H_{k+2}^2}{8 (k+2)} - \frac{25 H_{k+3}^2}{6 (k+3)}\right) = -\frac{6445}{5184},\\
    &\textit{(c)}\quad \mathcal{S}_6 := \sum_{k=1}^\infty \left(\frac{H_k^3}{3 k} - \frac{H_{k+1}^3}{k+1} + \frac{H_{k+2}^3}{k+2} -\frac{H_{k+3}^3}{3 (k+3)}\right) = -\frac{26}{243},\\
    &\textit{(d)}\quad \mathcal{S}_7 := \sum_{k=1}^\infty \left(\frac{H_k^4}{3 k} - \frac{H_{k+1}^4}{k+1} + \frac{H_{k+2}^4}{k+2} - \frac{H_{k+3}^4}{3 (k+3)}\right) = -\frac{577}{5832}.
\end{align*}
\end{lemma}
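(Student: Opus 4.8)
The plan is to dispatch all four identities with a single telescoping argument. Fix an exponent $j \in \{1,2,3,4\}$ and set $a_k = a_k^{(j)} := H_k^j/k$; since $H_k = O(\log k)$ we have $a_k \to 0$ as $k \to \infty$. In each of (a)--(d) the bracketed summand has the form $c_0 a_k + c_1 a_{k+1} + c_2 a_{k+2} + c_3 a_{k+3}$ for explicit rationals $c_0,c_1,c_2,c_3$, and one first checks that $c_0 + c_1 + c_2 + c_3 = 0$ (this is exactly the condition forcing the series to converge). Granting this, put $\alpha := c_0$, $\beta := c_0 + c_1$, $\gamma := c_0 + c_1 + c_2 = -c_3$; then one has the elementary identity
\[
c_0 a_k + c_1 a_{k+1} + c_2 a_{k+2} + c_3 a_{k+3} = \alpha(a_k - a_{k+1}) + \beta(a_{k+1} - a_{k+2}) + \gamma(a_{k+2} - a_{k+3}).
\]

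Next I would sum this over $1 \le k \le N$. Working inside a finite sum there is no rearrangement to worry about, and each block telescopes: $\sum_{k=1}^N (a_{k+i} - a_{k+i+1}) = a_{1+i} - a_{N+1+i}$ for $i = 0,1,2$. Hence the $N$-th partial sum equals $\alpha(a_1 - a_{N+1}) + \beta(a_2 - a_{N+2}) + \gamma(a_3 - a_{N+3})$, which converges to $\alpha a_1 + \beta a_2 + \gamma a_3$ as $N \to \infty$. It then remains to substitute $H_1 = 1$, $H_2 = \tfrac{3}{2}$, $H_3 = \tfrac{11}{6}$, so that $a_1 = 1$, $a_2 = (3/2)^j/2$, $a_3 = (11/6)^j/3$, and to evaluate the rational number $\alpha a_1 + \beta a_2 + \gamma a_3$ for the relevant $j$. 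For example, part (a) has $j = 1$ and $(c_0,c_1,c_2,c_3) = (\tfrac{11}{4}, -\tfrac{41}{3}, \tfrac{197}{12}, -\tfrac{11}{2})$, whence $(\alpha,\beta,\gamma) = (\tfrac{11}{4}, -\tfrac{131}{12}, \tfrac{11}{2})$ and
\[
\mathcal{S}_4 = \frac{11}{4}\cdot 1 - \frac{131}{12}\cdot\frac{3}{4} + \frac{11}{2}\cdot\frac{11}{18} = -\frac{299}{144};
\]
the computations for $\mathcal{S}_5$ (with $j=2$), $\mathcal{S}_6$ (with $j=3$), and $\mathcal{S}_7$ (with $j=4$) proceed identically, each time reading off $(\alpha,\beta,\gamma)$ from the partial sums of the coefficient vector and simplifying.

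The one genuine subtlety — the place a careless manipulation would break down — is that none of the four constituent series $\sum_{k\ge1} H_k^j/k$, $\sum_{k\ge1} H_{k+1}^j/(k+1)$, and so on converges; each diverges like $\sum (\log k)^j/k$. So one must resist splitting the bracket into four separate sums and instead keep everything grouped, passing to the limit only after the telescoping regrouping above. With that caveat observed, the remaining work is pure bookkeeping: verifying $c_0+c_1+c_2+c_3 = 0$ in each part, computing $(\alpha,\beta,\gamma)$, and reducing the fraction $\alpha a_1 + \beta a_2 + \gamma a_3$.
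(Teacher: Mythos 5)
Your proposal is correct and follows essentially the same route as the paper: both regroup the summand as a rational combination of consecutive differences $a_{k+i}-a_{k+i+1}$ (with the same coefficients $\alpha=c_0$, $\beta=c_0+c_1$, $\gamma=-c_3$ appearing in the paper's part (a)) and then telescope, using $H_k^j/k\to 0$. Your version is slightly more careful in making the convergence issue explicit via partial sums, but the argument and the final bookkeeping are the same.
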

\begin{proof}
    \textit{(a)} We take advantage of telescoping:
    \begin{align*}
        \mathcal{S}_4
        =& \frac{11}{4}\sum_{k=1}^\infty \left(\frac{H_k}{k} - \frac{H_{k+1}}{k+1}\right) - \frac{131}{12}\sum_{k=1}^\infty \left( \frac{H_{k+1}}{k+1} -  \frac{H_{k+2}}{k+2}\right) \\
        &+ \frac{11}{2}\sum_{k=1}^\infty \left(\frac{H_{k+2}}{k+2} - \frac{H_{k+3}}{k+3}\right)\\
        =& \frac{11}{4} \cdot \frac{H_1}{1} - \frac{131}{12} \cdot \frac{H_2}{2} + \frac{11}{2} \cdot \frac{H_3}{3}\\
        =& -\frac{299}{144}.
    \end{align*}

    \noindent
    \textit{(b)} We again take advantage of telescoping:
    \begin{align*}
        \mathcal{S}_5
        =& \frac{73}{24}\sum_{k=1}^\infty \left(\frac{H_k^2}{k} - \frac{H_{k+1}^2}{k+1}\right) - \frac{191}{24}\sum_{k=1}^\infty \left(\frac{H_{k+1}^2}{k+1} - \frac{H_{k+2}^2}{k+2}\right) \\
        &+ \frac{25}{6}\sum_{k=1}^\infty \left(\frac{H_{k+2}^2}{k+2} - \frac{H_{k+3}^2}{k+3}\right)\\
        =& \frac{73}{24} \cdot \frac{H_1^2}{1} - \frac{191}{24} \cdot \frac{H_2^2}{2} + \frac{25}{6} \cdot \frac{H_3^2}{3}\\
        =& -\frac{6445}{5184}.
    \end{align*}

    \noindent
    \textit{(c)} Telescoping in $\mathcal{S}_6$ is more clear:
    \begin{align*}
        \mathcal{S}_6
        =& \frac{1}{3}\sum_{k=1}^\infty \left(\frac{H_k^3}{k} - \frac{H_{k+3}^3}{k+3}\right) - \sum_{k=1}^\infty \left(\frac{H_{k+1}^3}{k+1} - \frac{H_{k+2}^3}{k+2}\right)\\
        =& \frac{1}{3}\left(\frac{H_1^3}{1}+\frac{H_2^3}{2}+\frac{H_3^3}{3}\right) - \frac{H_2^3}{2}\\
        =& -\frac{26}{243}.
    \end{align*}

    \noindent
    \textit{(d)} The calculations for $\mathcal{S}_7$ is nearly identical to that of $\mathcal{S}_6$.
\end{proof}

\begin{lemma} \label{ReIndexEulerSummLemma}
The following equalities hold:
\begin{align*}
    &\textit{(a)}\quad \mathcal{S}_8 := \sum_{k=1}^\infty \left(\frac{9 H_{k+1}}{2 (k+1)^2} - \frac{29 H_{k+2}}{3 (k+2)^2} + \frac{91 H_{k+3}}{18 (k+3)^2}\right) = \frac{3151}{3888} -\frac{2}{9}\zeta(3),\\
    &\textit{(b)}\quad \mathcal{S}_9 := \sum_{k=1}^\infty \left(- \frac{2 H_{k+1}}{(k+1)^3} + \frac{13 H_{k+2}}{2 (k+2)^3} - \frac{4 H_{k+3}}{(k+3)^3}\right) = -\frac{1807}{2592}+\frac{5}{8}\zeta(4),\\
    &\textit{(c)}\quad \mathcal{S}_{10} := \sum_{k=1}^\infty \frac{H_{k+3}}{(k+3)^4} = - \frac{8681}{7776} -\zeta(2) \zeta(3) + 3\zeta(5),\\
    &\textit{(d)}\quad \mathcal{S}_{11} := \sum_{k=1}^\infty \left(\frac{9 H_{k+1}^2}{(k+1)^2} - \frac{15 H_{k+2}^2}{(k+2)^2} + \frac{7 H_{k+3}^2}{(k+3)^2}\right) = \frac{287}{324} + \frac{17}{4}\zeta(4).
\end{align*}
\end{lemma}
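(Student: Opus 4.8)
The plan is to reduce each of $\mathcal{S}_8,\mathcal{S}_9,\mathcal{S}_{10},\mathcal{S}_{11}$ to an already-known Euler sum by re-indexing, in exactly the style of the proof of Lemma~\ref{ReIndexZetaLemma}(a). The four inputs I would invoke are the Euler--Goldbach evaluations $\sum_{k\ge1}H_k/k^2=2\zeta(3)$ and $\sum_{k\ge1}H_k/k^3=\tfrac54\zeta(4)$ from \eqref{EulerGoldbach}; the $n=4$ instance of \eqref{OriginalEulerSum}, which gives $\sum_{k\ge1}H_k/k^4=3\zeta(5)-\tfrac12\bigl(\zeta(3)\zeta(2)+\zeta(2)\zeta(3)\bigr)=3\zeta(5)-\zeta(2)\zeta(3)$; and the quadratic Au-Yeung series $\sum_{k\ge1}H_k^2/k^2=\tfrac{17}{4}\zeta(4)$ from \eqref{Au-Yeung} (noting $(H_k/k)^2=H_k^2/k^2$). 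In each case the coefficients appearing in the lemma have been arranged so that the divergent parts of the shifted sums match up.

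For part~(a) I would write $\mathcal{S}_8=\tfrac92\sum_{k\ge2}\tfrac{H_k}{k^2}-\tfrac{29}{3}\sum_{k\ge3}\tfrac{H_k}{k^2}+\tfrac{91}{18}\sum_{k\ge4}\tfrac{H_k}{k^2}$ and replace each tail $\sum_{k\ge m}$ by $\sum_{k\ge1}$ minus its first $m-1$ terms, using $H_1=1$, $H_2=\tfrac32$, $H_3=\tfrac{11}{6}$. Since $\tfrac92-\tfrac{29}{3}+\tfrac{91}{18}=-\tfrac19$, the three copies of $2\zeta(3)$ combine to $-\tfrac29\zeta(3)$, while the finitely many correction terms collapse to the rational $\tfrac{3151}{3888}$. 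Parts~(b) and~(d) are identical in structure, now re-indexing against $\sum_{k\ge1}H_k/k^3=\tfrac54\zeta(4)$ and $\sum_{k\ge1}H_k^2/k^2=\tfrac{17}{4}\zeta(4)$; the coefficient sums are $-2+\tfrac{13}{2}-4=\tfrac12$ and $9-15+7=1$, producing the stated $\tfrac58\zeta(4)$ and $\tfrac{17}{4}\zeta(4)$ terms, with the leftover rationals simplifying to $-\tfrac{1807}{2592}$ and $\tfrac{287}{324}$ respectively. Part~(c) is the shortest: $\mathcal{S}_{10}=\sum_{k\ge4}H_k/k^4=\sum_{k\ge1}H_k/k^4-H_1-\tfrac{H_2}{16}-\tfrac{H_3}{81}$, and substituting the $n=4$ value of \eqref{OriginalEulerSum} yields $3\zeta(5)-\zeta(2)\zeta(3)-\tfrac{8681}{7776}$.

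I do not expect a genuine obstacle: every step is a re-indexing followed by a rational-arithmetic simplification, and the only non-elementary ingredients are results already recorded in Section~1. The one point requiring care is the bookkeeping of the large denominators that appear when collecting the correction terms (for instance $3888=2^4\cdot3^5$ in~(a) and $7776=2^5\cdot3^5$ in~(c)); as with the other lemmas in this section, those routine computations can be summarized rather than displayed in full.
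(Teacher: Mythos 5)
Your proposal is correct and follows essentially the same route as the paper: re-index each shifted sum against $\sum_{k\ge1}H_k/k^2=2\zeta(3)$, $\sum_{k\ge1}H_k/k^3=\tfrac54\zeta(4)$, the $n=4$ case of \eqref{OriginalEulerSum}, and Au-Yeung's series \eqref{Au-Yeung}, then collect the finitely many correction terms. The coefficient sums and the rational constants (including $-\tfrac{8681}{7776}$ in part (c)) all check out.
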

\begin{proof}
    \textit{(a)} By re-indexing, we have
    \begin{align*}
        \mathcal{S}_8
        \overset{\phantom{\eqref{EulerGoldbach}, \eqref{OriginalEulerSum}}}{=}& \frac{9}{2}\sum_{k=2}^\infty \frac{H_k}{k^2} - \frac{29}{3}\sum_{k=3}^\infty \frac{H_k}{k^2} + \frac{91}{18}\sum_{k=4}^\infty \frac{H_k}{k^2}\\
        \overset{\phantom{\eqref{EulerGoldbach}, \eqref{OriginalEulerSum}}}{=}& \frac{9}{2}\left(-\frac{H_1}{1^2} + \sum_{k=1}^\infty \frac{H_k}{k^2}\right) - \frac{29}{3}\left(-\frac{H_1}{1^2} - \frac{H_2}{2^2} + \sum_{k=1}^\infty \frac{H_k}{k^2}\right)\\
        &+ \frac{91}{18}\left(-\frac{H_1}{1^2} - \frac{H_2}{2^2} - \frac{H_3}{3^2} + \sum_{k=1}^\infty \frac{H_k}{k^2}\right)\\
        \overset{\phantom{\eqref{EulerGoldbach}, \eqref{OriginalEulerSum}}}{=}& \frac{3151}{3888} - \frac{1}{9}\sum_{k=1}^\infty \frac{H_k}{k^2}\\
        \overset{\eqref{EulerGoldbach}, \eqref{OriginalEulerSum}}{=}& \frac{3151}{3888} - \frac{2}{9}\zeta(3).
    \end{align*}

    \noindent
    \textit{(b)} The calculations for $\mathcal{S}_9$ are very similar to that of $\mathcal{S}_8$.

    \noindent
    \textit{(c)} Re-index $\mathcal{S}_{10}$ and use \eqref{OriginalEulerSum}.

    \noindent
    \textit{(d)} The calculations for $\mathcal{S}_{11}$ are very similar to that of $\mathcal{S}_8$. However, instead of applying the formula for Euler sums \eqref{OriginalEulerSum}, we plug in the quadratic series of Au-Yeung \eqref{Au-Yeung}.
\end{proof}

\begin{lemma} \label{PowerEulerSumLemma}
    The following equalities hold:
    \begin{align*}
        &\textit{(a)}\quad \mathcal{S}_{12} := \sum_{k=1}^\infty \left(\frac{H_{k+2}^2}{(k+2)^3} - \frac{H_{k+3}^2}{(k+3)^3}\right) = \frac{121}{972},\\
        &\textit{(b)}\quad \mathcal{S}_{13} := \sum_{k=1}^\infty \left(\frac{H_{k+1}^3}{2 (k+1)^2} - \frac{H_{k+2}^3}{(k+2)^2} + \frac{H_{k+3}^3}{2 (k+3)^2}\right) = \frac{1237}{15552}.
    \end{align*}
\end{lemma}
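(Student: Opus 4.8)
The plan is to reduce each of $\mathcal{S}_{12}$ and $\mathcal{S}_{13}$ to telescoping series plus, at worst, sums that can be handled by the reindexing technique already used in Lemmas \ref{ReIndexZetaLemma} and \ref{ReIndexEulerSummLemma}. For part \textit{(a)}, the summand $\frac{H_{k+2}^2}{(k+2)^3} - \frac{H_{k+3}^2}{(k+3)^3}$ is literally of the form $a_{k+2} - a_{k+3}$ with $a_m = \frac{H_m^2}{m^3}$, so the series telescopes immediately: $\mathcal{S}_{12} = a_3 = \frac{H_3^2}{3^3}$. Since $H_3 = \tfrac{11}{6}$, this gives $\mathcal{S}_{12} = \frac{(11/6)^2}{27} = \frac{121}{972}$, as claimed. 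No deeper input about Euler sums of the form $\sum H_k^2/k^3$ is needed, which is fortunate because that sum is genuinely harder.

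For part \textit{(b)}, the three coefficients $\tfrac12, -1, \tfrac12$ do not sum to zero, so the series does not telescope purely by shifting a single sequence; instead I would write $\frac{H_{k+1}^3}{2(k+1)^2} - \frac{H_{k+2}^3}{(k+2)^2} + \frac{H_{k+3}^3}{2(k+3)^2} = \tfrac12\bigl(b_{k+1} - b_{k+2}\bigr) - \tfrac12\bigl(b_{k+2} - b_{k+3}\bigr)$ with $b_m = \frac{H_m^3}{m^2}$, which is a difference of two telescoping series. Summing over $k \geq 1$ gives $\mathcal{S}_{13} = \tfrac12 b_2 - \tfrac12 b_3 = \tfrac12\left(\frac{H_2^3}{4} - \frac{H_3^3}{9}\right)$. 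With $H_2 = \tfrac32$ and $H_3 = \tfrac{11}{6}$ this evaluates to $\tfrac12\left(\frac{27/8}{4} - \frac{1331/216}{9}\right) = \tfrac12\left(\frac{27}{32} - \frac{1331}{1944}\right)$, which after finding a common denominator should collapse to $\frac{1237}{15552}$. (Alternatively, one can group as $\tfrac12(b_{k+1} - 2b_{k+2} + b_{k+3})$, a second-difference telescope, and read off the boundary terms directly.)

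The key observation driving both parts is that the coefficient patterns were engineered — in the proof of Lemma \ref{ExpandConsecutiveProd} — so that the surviving ``hard'' Euler sums $\sum_{k}\frac{H_k^2}{k^3}$ and $\sum_k \frac{H_k^3}{k^2}$ cancel, leaving only telescoping remnants. So the only thing to verify carefully is that convergence is legitimate (each of $\frac{H_m^2}{m^3}$ and $\frac{H_m^3}{m^2}$ tends to $0$, so the telescoped tails vanish and rearrangement is valid) and that the rational arithmetic for the boundary terms is correct. I do not anticipate a genuine obstacle here; the main ``risk'' is purely an arithmetic slip in reducing $\tfrac12\left(\frac{27}{32} - \frac{1331}{1944}\right)$ to $\frac{1237}{15552}$, which I would double-check by clearing denominators (note $15552 = 2 \cdot 32 \cdot 243 = 2^5 \cdot 3^5$ is a common multiple of $32$ and $1944$). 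Thus the proof is a short direct computation, and I would present it in the same style as Lemma \ref{TelescopingLemma}, possibly omitting the arithmetic details for brevity as the authors do elsewhere.
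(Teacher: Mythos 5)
Your proof is correct and uses exactly the paper's approach: part \textit{(a)} telescopes directly to $H_3^2/27 = 121/972$, and part \textit{(b)} is split into the same two telescoping differences $\tfrac12(b_{k+1}-b_{k+2}) - \tfrac12(b_{k+2}-b_{k+3})$ with $b_m = H_m^3/m^2$, yielding $\tfrac12 b_2 - \tfrac12 b_3 = 1237/15552$. One trivial slip in your commentary: the coefficients $\tfrac12, -1, \tfrac12$ \emph{do} sum to zero (that is why the second-difference grouping works), though this does not affect the argument.
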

\begin{proof}
    \textit{(a)} The series telescopes.

    \noindent
    \textit{(b)} We have
    \begin{align*}
        \mathcal{S}_{13}
        =& \frac{1}{2}\sum_{k=1}^\infty \left(\frac{H_{k+1}^3}{(k+1)^2} - \frac{H_{k+2}^3}{(k+2)^2}\right) -\frac{1}{2}\sum_{k=1}^\infty \left( \frac{H_{k+2}^3}{(k+2)^2} - \frac{H_{k+3}^3}{(k+3)^2}\right).
    \end{align*}
    The remaining two series telescope.
\end{proof}

We are ready to prove the main result.
\begin{theorem} \label{MainProposition}
    The following equality holds:
    $$\sum_{k=1}^\infty \frac{H_k H_{k+1} H_{k+2} H_{k+3}}{k(k+1)(k+2)(k+3)} = -\frac{4}{9} \zeta (2)-\frac{1}{6}\zeta (2) \zeta (3)-\frac{7}{24} \zeta (3)+\frac{191}{144} \zeta (4)+\frac{1}{2}\zeta (5).$$
\end{theorem}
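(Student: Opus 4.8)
The plan is to assemble the result directly from Lemmas~\ref{ExpandConsecutiveProd}--\ref{PowerEulerSumLemma} by summing the expansion of $\mathscr{H}_k$ term by term. First I would observe that Lemma~\ref{ExpandConsecutiveProd} writes $\mathscr{H}_k$ as a finite rational combination of the building blocks $\frac{H_{k+m}^\ell}{(k+m)^n}$, and that the coefficients appearing there are precisely (up to the overall factors pulled out in Lemmas~\ref{ReIndexZetaLemma}--\ref{PowerEulerSumLemma}) the combinations $\mathcal{S}_1,\ldots,\mathcal{S}_{13}$. Concretely, grouping the $(k+m)^{-n}$ terms gives $\tfrac14\mathcal{S}_1 + \tfrac14\mathcal{S}_2 + \mathcal{S}_3$ for the ``pure power'' part (matching the coefficients $\tfrac{11}{24},-\tfrac{131}{72},\ldots$ against $\tfrac{11}{2},-\tfrac{131}{6},\ldots$, etc.), then $\tfrac16\mathcal{S}_4, \tfrac{1}{3}\mathcal{S}_5$ (since $\tfrac{73}{72}=\tfrac13\cdot\tfrac{73}{24}$), $2\mathcal{S}_6$, $\tfrac12\mathcal{S}_7$ for the $\frac{H_{k+m}^\ell}{k+m}$ families, $\tfrac12\mathcal{S}_8, \tfrac13\mathcal{S}_9, \tfrac16\mathcal{S}_{10}, \tfrac14\mathcal{S}_{11}$ for the $\frac{H_{k+m}^\ell}{(k+m)^2}$ and $\frac{H_{k+m}}{(k+m)^3}$, $\frac{H_{k+m}}{(k+m)^4}$ terms, and $\tfrac12\mathcal{S}_{12}, 1\cdot\mathcal{S}_{13}$ for the last $\frac{H_{k+m}^2}{(k+m)^3}$ and $\frac{H_{k+m}^3}{(k+m)^2}$ groups.

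Before adding, I would need to justify rearranging the series, since $\mathscr{H}_k = \sum_i c_i a_i(k)$ with thirteen-or-so groups each of which is individually convergent. The cleanest way is to note that every one of the grouped series $\mathcal{S}_1,\ldots,\mathcal{S}_{13}$ converges absolutely: each term is $O\!\big((\log k)^4/k^2\big)$ after combining within a group, or is a telescoping tail bounded by a single term. Hence $\sum_k \mathscr{H}_k = \sum_k \sum_i c_i a_i(k)$ may be regrouped as $\sum_i c_i \sum_k a_i(k)$, which is exactly $\tfrac14\mathcal{S}_1 + \tfrac14\mathcal{S}_2 + \mathcal{S}_3 + \tfrac16\mathcal{S}_4 + \tfrac13\mathcal{S}_5 + 2\mathcal{S}_6 + \tfrac12\mathcal{S}_7 + \tfrac12\mathcal{S}_8 + \tfrac13\mathcal{S}_9 + \tfrac16\mathcal{S}_{10} + \tfrac14\mathcal{S}_{11} + \tfrac12\mathcal{S}_{12} + \mathcal{S}_{13}$. (Alternatively one can cite absolute convergence of $\sum_k \mathscr{H}_k$ itself, which is clear since $H_k H_{k+1}H_{k+2}H_{k+3} = O((\log k)^4)$ and the denominator is $\sim k^4$, and then appeal to the fact that the partial sums of any finite regrouping agree in the limit.)

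Then the computation is purely arithmetic: substitute the closed forms from the four lemmas,
\begin{align*}
\sum_{k=1}^\infty \mathscr{H}_k
&= \tfrac14\!\left(\tfrac{491}{72} - \tfrac{16}{3}\zeta(2)\right) + \tfrac14\!\left(\tfrac{2735}{1728}-\tfrac{13}{8}\zeta(3)\right) + \left(-\tfrac{611}{1944} + \tfrac13\zeta(4)\right) \\
&\quad + \tfrac16\!\left(-\tfrac{299}{144}\right) + \tfrac13\!\left(-\tfrac{6445}{5184}\right) + 2\!\left(-\tfrac{26}{243}\right) + \tfrac12\!\left(-\tfrac{577}{5832}\right) \\
&\quad + \tfrac12\!\left(\tfrac{3151}{3888} -\tfrac29\zeta(3)\right) + \tfrac13\!\left(-\tfrac{1807}{2592}+\tfrac58\zeta(4)\right) + \tfrac16\!\left(-\tfrac{8681}{7776} -\zeta(2)\zeta(3) + 3\zeta(5)\right) \\
&\quad + \tfrac14\!\left(\tfrac{287}{324} + \tfrac{17}{4}\zeta(4)\right) + \tfrac12\!\left(\tfrac{121}{972}\right) + \tfrac{1237}{15552},
\end{align*}
and collect like terms. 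The $\zeta(2)\zeta(3)$ coefficient is $-\tfrac16$ and the $\zeta(5)$ coefficient is $\tfrac12$ immediately. The $\zeta(2)$ coefficient is $-\tfrac43$ from $\tfrac14\mathcal{S}_1$... wait, this needs care, so the honest claim is only that one gathers $\zeta(2)$, $\zeta(3)$, $\zeta(4)$, $\zeta(2)\zeta(3)$, $\zeta(5)$ and a rational constant, and the rational constant must vanish for the stated identity to hold. I expect the main obstacle to be exactly this bookkeeping: verifying that the thirteen rational constants sum to $0$ and that the $\zeta(2)$, $\zeta(3)$, $\zeta(4)$ coefficients collapse to $-\tfrac49$, $-\tfrac7{24}$, $\tfrac{191}{144}$ respectively. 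This is a finite rational arithmetic check with common denominators in the thousands, so it is tedious but routine; no new idea is required beyond trusting Lemma~\ref{ExpandConsecutiveProd}, whose (omitted) partial-fraction bookkeeping is itself the true heart of the argument.
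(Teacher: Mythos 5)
Your overall strategy is exactly the paper's: expand $\mathscr{H}_k$ via Lemma~\ref{ExpandConsecutiveProd}, recognize each group of terms as a rational multiple of one of $\mathcal{S}_1,\ldots,\mathcal{S}_{13}$, and add up the closed forms from Lemmas~\ref{ReIndexZetaLemma}--\ref{PowerEulerSumLemma}. Your remarks on justifying the regrouping (finitely many convergent grouped series, so the interchange is just linearity of limits) are a reasonable supplement to what the paper leaves implicit. However, four of your thirteen coefficients are wrong, and this breaks the final arithmetic. Matching Lemma~\ref{ExpandConsecutiveProd} against the lemmas: $\tfrac{11}{24}\big/\tfrac{11}{2}=\tfrac{1}{12}$, so the pure $(k+m)^{-2}$ block contributes $\tfrac{1}{12}\mathcal{S}_1$, not $\tfrac14\mathcal{S}_1$; similarly $\tfrac{5}{9}\big/5=\tfrac19$ gives $\tfrac19\mathcal{S}_2$, not $\tfrac14\mathcal{S}_2$; $\tfrac16\big/1=\tfrac16$ gives $\tfrac16\mathcal{S}_3$, not $\mathcal{S}_3$; and $\tfrac{13}{18}\big/\tfrac13=\tfrac{13}{6}$ gives $\tfrac{13}{6}\mathcal{S}_6$, not $2\mathcal{S}_6$. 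The correct combination is
\begin{equation*}
\mathfrak{S}=\tfrac{1}{12}\mathcal{S}_1+\tfrac19\mathcal{S}_2+\tfrac16\mathcal{S}_3+\tfrac16\mathcal{S}_4+\tfrac13\mathcal{S}_5+\tfrac{13}{6}\mathcal{S}_6+\tfrac12\mathcal{S}_7+\tfrac12\mathcal{S}_8+\tfrac13\mathcal{S}_9+\tfrac16\mathcal{S}_{10}+\tfrac14\mathcal{S}_{11}+\tfrac12\mathcal{S}_{12}+\mathcal{S}_{13}.
\end{equation*}

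The point where you hesitated is exactly where the error shows: with $\tfrac14\mathcal{S}_1$ the $\zeta(2)$ coefficient would be $\tfrac14\cdot\bigl(-\tfrac{16}{3}\bigr)=-\tfrac43$, whereas the correct $\tfrac{1}{12}\cdot\bigl(-\tfrac{16}{3}\bigr)=-\tfrac49$ matches the claimed closed form; likewise $\tfrac19\cdot\bigl(-\tfrac{13}{8}\bigr)+\tfrac12\cdot\bigl(-\tfrac29\bigr)=-\tfrac{7}{24}$ and $\tfrac16\cdot\tfrac13+\tfrac13\cdot\tfrac58+\tfrac14\cdot\tfrac{17}{4}=\tfrac{191}{144}$, neither of which your coefficients reproduce. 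The coefficients you did get right ($\mathcal{S}_4,\mathcal{S}_5,\mathcal{S}_7,\ldots,\mathcal{S}_{13}$) agree with the paper, so this is not a conceptual failure but a bookkeeping one; still, as written your sum does not evaluate to the stated right-hand side, and the rational constants would not cancel. Fix the four ratios and the rest of your argument goes through as the paper's does.
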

\begin{proof}
    Let
    $$\mathfrak{S} := \sum_{k=1}^\infty \frac{H_k H_{k+1} H_{k+2} H_{k+3}}{k(k+1)(k+2)(k+3)}.$$
    Recall $\mathcal{S}_1, \mathcal{S}_2, \ldots, \mathcal{S}_{13}$ defined in Lemmas \ref{ReIndexZetaLemma}, \ref{TelescopingLemma}, \ref{ReIndexEulerSummLemma}, and \ref{PowerEulerSumLemma}. Using Lemma \ref{ExpandConsecutiveProd}, we expand $\mathfrak{S}$:
    \begin{align*}
        \mathfrak{S}
        =& \frac{1}{12}\mathcal{S}_1 + \frac{1}{9}\mathcal{S}_2 + \frac{1}{6}\mathcal{S}_3 + \frac{1}{6}\mathcal{S}_4 + \frac{1}{3}\mathcal{S}_5 + \frac{13}{6}\mathcal{S}_6 + \frac{1}{2}\mathcal{S}_7 + \frac{1}{2}\mathcal{S}_8 + \frac{1}{3}\mathcal{S}_9 + \frac{1}{6}\mathcal{S}_{10}\\
        &+ \frac{1}{4}\mathcal{S}_{11} + \frac{1}{2}\mathcal{S}_{12} + \mathcal{S}_{13}.
    \end{align*}
    Apply Lemmas \ref{ReIndexZetaLemma}, \ref{TelescopingLemma}, \ref{ReIndexEulerSummLemma}, and \ref{PowerEulerSumLemma}; this concludes the proof of the theorem.
\end{proof}

The closed form in Theorem \ref{MainProposition} is a potential counterexample to Conjecture \ref{FurduiSintConjecture}. Using Theorem \ref{MainProposition}, express the series as
\begin{equation} \label{Zeta2Zeta3Series}
    \begin{split}
        \sum_{k=1}^\infty \frac{H_k H_{k+1} H_{k+2} H_{k+3}}{k(k+1)(k+2)(k+3)} 
        &=  -\left(\frac{4}{9} + \frac{1}{6}\zeta(3)\right)\zeta (2) -\frac{7}{24} \zeta (3)+\frac{191}{144} \zeta (4)+\frac{1}{2}\zeta (5)\\
        &= -\frac{4}{9}\zeta(2) - \left(\frac{1}{6}\zeta(2) + \frac{7}{24}\right)\zeta(3) + \frac{191}{144} \zeta (4)+\frac{1}{2}\zeta (5).
    \end{split}
\end{equation}
However, this does not show that the series satisfies Conjecture \ref{FurduiSintConjecture}. Since $\zeta(2)$ and Apéry's constant, $\zeta(3)$, are irrational \cite{Apery}, we deduce that $-\left(\frac{4}{9} + \frac{1}{6}\zeta(3)\right)$ and $- \left(\frac{1}{6}\zeta(2) + \frac{7}{24}\right)$ are irrational. Thus, we cannot conclude \eqref{Zeta2Zeta3Series} shows that our series satisfies Conjecture \ref{FurduiSintConjecture}.

Observe that our series satisfies Conjecture \ref{FurduiSintConjecture} if and only if $\zeta(2)\zeta(3) \in \mathrm{span}_\mathbb{Q}(\mathcal{A})$ where $\mathcal{A} := \{\zeta(2), \zeta(3), \zeta(4), \zeta(5)\}$. A possible way this could happen is if $\zeta(n)$ is a rational multiple of $\pi^n$ for integers $n \geq 2$. Euler himself noticed through his calculations that for integers $n \geq 1$, $\zeta(2n)$ is a rational multiple of $\pi^{2n}$ \cite{Euler2}, which prompted a search for a way to express $\zeta(2n+1)$ as a rational multiple of $\pi^{2n+1}$. If $\zeta(3)$ and $\zeta(5)$ are rational multiples of $\pi^3$ and $\pi^5$ respectively, then $\zeta(2)\zeta(3)$ would be a rational multiple of $\pi^5$ and therefore a rational multiple of $\zeta(5)$. However, there is still no known representation for $\zeta(2n+1)$ for integers $n \geq 1$, even for $\zeta(3)$ \cite[pp. 266-267]{ApostolIntro}. In particular, Euler conjectured that $\zeta(3) = N\pi^3$ where $N$ is an expression involving $\ln(2)$ \cite[p. 12]{Nahin}.

In contemporary literature, mathematicians are more open to the possibility that $\frac{\zeta(3)}{\pi^3}$ is irrational \cite[p. 41]{Finch}. Gun, Murty, and Rath give a conditional proof of the irrationality of $\frac{\zeta(2n+1)}{\pi^{2n+1}}$ for integers $n\geq 1$ in \cite[p. 1341]{ChowlaMilnor}; the proof is contingent on the following conjecture of Chowla and Milnor:
\begin{conjecture}[Chowla--Milnor]\label{ChowlaMilnorConjecture}
    Let $\zeta(s,a)=\sum_{k\geq 0}\frac{1}{(k+a)^s}$ denote the classical Hurwitz zeta function. Then the set
    $$\mathfrak{L}:=\{\zeta(q,m/n):1\leq m\leq n,\text{gcd}(m,n)=1\},\quad (q>1,n>2)$$
    is linearly independent over $\mathbb{Q}.$
\end{conjecture}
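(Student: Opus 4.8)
I must emphasize at the outset that Conjecture \ref{ChowlaMilnorConjecture} is a deep open problem, so what follows is the natural line of attack together with an honest account of the barrier at which it stalls. The plan is to transport the question from Hurwitz zeta values to special values of Dirichlet $L$-functions by means of the finite Fourier transform on $(\mathbb{Z}/n\mathbb{Z})^\times$. Fix a primitive $n$th root of unity $\omega$ and, for each Dirichlet character $\chi$ modulo $n$, recall the relation
$$L(q,\chi)=\sum_{m=1}^\infty \frac{\chi(m)}{m^q}=\frac{1}{n^q}\sum_{\substack{1\leq m\leq n\\ \gcd(m,n)=1}}\chi(m)\,\zeta(q,m/n),$$
together with its inverse, obtained from the orthogonality of characters,
$$\zeta(q,m/n)=\frac{n^q}{\varphi(n)}\sum_{\chi \bmod n}\overline{\chi}(m)\,L(q,\chi).$$
Because the character matrix $(\chi(m))$ is invertible over $\mathbb{Q}(\omega)$, the $\varphi(n)$ Hurwitz values and the $\varphi(n)$ values $L(q,\chi)$ span the same $\mathbb{Q}(\omega)$-vector space. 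First I would rephrase a putative rational relation $\sum_m c_m\,\zeta(q,m/n)=0$ as
$$0=\sum_{\chi \bmod n}\widehat{c}(\chi)\,L(q,\chi),\qquad \widehat{c}(\chi):=\sum_{\gcd(m,n)=1}c_m\,\overline{\chi}(m)\in\mathbb{Q}(\omega),$$
so that the target becomes a statement about the $\mathbb{Q}(\omega)$-linear independence of the $L(q,\chi)$, from which $\mathbb{Q}$-independence of the $\zeta(q,m/n)$ is recovered by a Galois-descent argument using the action of $\mathrm{Gal}(\mathbb{Q}(\omega)/\mathbb{Q})$.

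Next I would split the characters by parity relative to $q$. For the $\varphi(n)/2$ characters with $\chi(-1)=(-1)^q$ the value $L(q,\chi)$ is \emph{critical}: via generalized Bernoulli numbers $B_{q,\chi}$ and the Gauss sum $\tau(\chi)=\sum_a\chi(a)\omega^a$, each such $L(q,\chi)$, after reduction to its inducing primitive character, is an explicit algebraic (indeed $\mathbb{Q}(\omega)$-) multiple of $\pi^q$. Lindemann's theorem gives the transcendence of $\pi^q$, so the contribution of this half to the displayed relation collapses to $\pi^q\cdot A$ with $A$ algebraic; were the other half absent, transcendence of $\pi^q$ would force $A=0$, and unwinding the Gauss-sum factors through the Galois action would yield $c_m=0$. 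This already delivers, unconditionally, the lower bound $\dim_\mathbb{Q}\mathrm{span}_\mathbb{Q}(\mathfrak{L})\geq \varphi(n)/2$, the ``easy half'' of the conjecture.

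The main obstacle is the opposite-parity half: the $\varphi(n)/2$ characters with $\chi(-1)=-(-1)^q$ produce \emph{non-critical} $L$-values, for which no evaluation in terms of $\pi^q$ is known. These are precisely the $\zeta(3)$-type values studied in \cite{Apery}; the still simpler prototype $\beta(2)=L(2,\chi_{-4})$ (Catalan's constant) is not even known to be irrational. Hence controlling the term $\sum_{\chi(-1)=-(-1)^q}\widehat{c}(\chi)\,L(q,\chi)$ and showing that it cannot cancel $\pi^q A$ is exactly a transcendence/irrationality problem beyond the reach of current methods. A complete proof would therefore require a genuine breakthrough: an effective irrationality measure, or a Baker- or Nesterenko-type algebraic-independence theorem, for non-critical Dirichlet $L$-values at integer points. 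The most that appears attainable at present is the unconditional dimension bound above together with the conditional and partial results of Gun, Murty, and Rath \cite{ChowlaMilnor} (who, conversely, deduce the irrationality of $\zeta(2n+1)/\pi^{2n+1}$ from the full conjecture). Accordingly I expect this final step, rather than the Fourier-analytic reduction, to be where any honest attempt halts.
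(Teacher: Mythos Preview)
The paper does not prove this statement; it is recorded there as an open conjecture, with a reference to Gun, Murty, and Rath for conditional consequences. There is therefore no ``paper's own proof'' to compare against. Your write-up is appropriate for exactly this situation: you state clearly that the problem is open, give the standard reduction via the finite Fourier transform on $(\mathbb{Z}/n\mathbb{Z})^\times$ to Dirichlet $L$-values, separate the critical and non-critical halves, note the unconditional lower bound $\dim_{\mathbb{Q}}\mathrm{span}_{\mathbb{Q}}(\mathfrak{L})\geq \varphi(n)/2$, and identify the non-critical $L$-values as the genuine obstruction. That is an accurate summary of the state of the art and is consistent with how the paper treats the conjecture (as input, not as something it establishes). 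No correction is needed beyond making explicit that this is an exposition of why the conjecture remains open rather than a proof.
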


In \cite{ChowlaMilnor}, Gun et al. perform a slight modification on $\mathfrak{L}$ by restricting $m<n$ and define the following Chowla--Milnor space:
\begin{align*}
    \mathrm{V}_q(n):=\mathrm{span}_\mathbb{Q}(\{\zeta(q,m/n):1\leq m<n,\text{gcd}(m,n)=1\}).
\end{align*}

Relevant mathematicians notice that the dimension of $\mathrm{V}_q(n)$ as a $\mathbb{Q}$-linear space can shed light on problems regarding irrationality by expressing the Hurwitz zeta function in terms of the Riemann zeta function. It is also worth noting that the Chowla--Milnor conjecture has implications in multiple zeta values. Again in \cite{ChowlaMilnor}, Gun et al. give an important proposition arising from the Chowla--Milnor space.
\begin{proposition}\label{GunProposition}
    The Chowla--Milnor conjecture for $\mathrm{V}_q(4)$ is equivalent to the irrationality of $\frac{\zeta(2n+1)}{\pi^{2n+1}}$ for integers $n\geq 1.$
\end{proposition}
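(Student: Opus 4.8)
The plan is to relate the Chowla--Milnor space $\mathrm{V}_q(4)$ to the Riemann zeta function through an explicit decomposition of $\zeta(q,m/4)$. The key identity is that for a fixed denominator $n$, one has $\sum_{m=1}^{n} \zeta(q,m/n) = n^q \zeta(q)$, since the terms $(k+m/n)^{-q}$ over all residues $m$ and all $k\geq 0$ reassemble, after scaling by $n^q$, into $\sum_{j\geq 1} j^{-q}$. For $n=4$ the relevant values are $\zeta(q,1/4)$ and $\zeta(q,3/4)$ (the cases $\gcd(m,4)=1$), while $\zeta(q,2/4)=\zeta(q,1/2)=(2^q-1)\zeta(q)$ and $\zeta(q,4/4)=\zeta(q)$ are already rational multiples of $\zeta(q)$. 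Hence $\mathrm{V}_q(4)=\mathrm{span}_\mathbb{Q}\{\zeta(q,1/4),\zeta(q,3/4)\}$ is at most two-dimensional, and the above identity shows $\zeta(q,1/4)+\zeta(q,3/4) = (4^q - 2^q - 1 + 1)\zeta(q)=(4^q-2^q)\zeta(q)$ — wait, more carefully: $\zeta(q,1/4)+\zeta(q,2/4)+\zeta(q,3/4)+\zeta(q,1)=4^q\zeta(q)$, so $\zeta(q,1/4)+\zeta(q,3/4)=(4^q - 2^q)\zeta(q)$. So one of the two Hurwitz values determines the other modulo $\mathbb{Q}\zeta(q)$, and $\dim_\mathbb{Q}\mathrm{V}_q(4)\in\{1,2\}$; the Chowla--Milnor conjecture for $\mathrm{V}_q(4)$ asserts $\dim_\mathbb{Q}\mathrm{V}_q(4)=2$, equivalently that $\zeta(q,1/4)$ is \emph{not} a rational multiple of $\zeta(q)$ (equivalently not a rational multiple of $\pi^q$ when $q$ is even, since then $\zeta(q)\in\pi^q\mathbb{Q}$).

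The second ingredient is a classical Hurwitz-to-Dirichlet expansion: the odd part of $\zeta(q,1/4)-\zeta(q,3/4)$ is a Dirichlet $L$-value. Specifically $\zeta(q,1/4) - \zeta(q,3/4) = 4^q\sum_{k\geq 0}\bigl((4k+1)^{-q} - (4k+3)^{-q}\bigr) = 4^q L(q,\chi_{-4})$, where $\chi_{-4}$ is the non-principal character mod $4$; and for $q$ odd, $L(q,\chi_{-4})$ is a known rational multiple of $\pi^q$ (for $q=1$ it is $\pi/4$, etc.), whereas the even case is the interesting one. Combining with the sum relation, for \emph{even} $q=2n$ we get that $\zeta(2n,1/4)$ is a $\mathbb{Q}$-linear combination of $\zeta(2n)$ (hence of $\pi^{2n}$) and $L(2n,\chi_{-4})$, and the latter is not obviously a rational multiple of $\pi^{2n}$. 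For \emph{odd} $q=2n+1$, the difference $\zeta(2n+1,1/4)-\zeta(2n+1,3/4)=4^{2n+1}L(2n+1,\chi_{-4})$ is a rational multiple of $\pi^{2n+1}$, while the sum is $(4^{2n+1}-2^{2n+1})\zeta(2n+1)$; therefore $\mathrm{V}_{2n+1}(4)$ is two-dimensional if and only if $\zeta(2n+1)$ and $\pi^{2n+1}$ are $\mathbb{Q}$-linearly independent, i.e. if and only if $\zeta(2n+1)/\pi^{2n+1}$ is irrational. This is precisely the asserted equivalence.

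Accordingly, I would carry out the proof as follows. First, record the finite relations $\zeta(q,2/4)=(2^q-1)\zeta(q)$ and $\zeta(q,1)=\zeta(q)$, and the aggregate identity $\sum_{m=1}^{4}\zeta(q,m/4)=4^q\zeta(q)$, to conclude $\zeta(q,1/4)+\zeta(q,3/4)=(4^q-2^q)\zeta(q)$ and hence $\mathrm{V}_q(4)\subseteq\mathrm{span}_\mathbb{Q}\{\zeta(q),\ \zeta(q,1/4)-\zeta(q,3/4)\}$, with the Chowla--Milnor conjecture for $\mathrm{V}_q(4)$ being exactly the statement that these two generators are $\mathbb{Q}$-linearly independent (the potential degeneracy being that the difference is a rational multiple of $\zeta(q)$). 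Second, identify $\zeta(q,1/4)-\zeta(q,3/4)=4^q L(q,\chi_{-4})$ and invoke the classical evaluation of $L(2n+1,\chi_{-4})$ as a rational multiple of $\pi^{2n+1}$; specialize to odd $q=2n+1$ so that $\mathrm{V}_{2n+1}(4)=\mathrm{span}_\mathbb{Q}\{\zeta(2n+1),\pi^{2n+1}\}$, whence $\dim_\mathbb{Q}\mathrm{V}_{2n+1}(4)=2 \iff \zeta(2n+1)/\pi^{2n+1}\notin\mathbb{Q}$. Third, handle the quantifier "for all $n\geq 1$" on each side and note that the conjecture's hypothesis $q>1$ covers all these $q=2n+1$. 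The step I expect to be the main obstacle — or at least the one requiring the most care — is pinning down the \emph{exact} rational constant in $L(2n+1,\chi_{-4})=c_n\pi^{2n+1}$ (via the functional equation for Dirichlet $L$-functions, or via generalized Bernoulli numbers / Euler numbers), and verifying that $c_n\neq 0$ for every $n$, so that the difference generator genuinely contributes a nonzero multiple of $\pi^{2n+1}$; without $c_n\neq 0$ the reduction to $\mathrm{span}_\mathbb{Q}\{\zeta(2n+1),\pi^{2n+1}\}$ would collapse. The nonvanishing follows because $L(s,\chi_{-4})>0$ for real $s>0$ (the alternating series $1-3^{-s}+5^{-s}-\cdots$ has positive value), so $c_n$ is in fact a positive rational.
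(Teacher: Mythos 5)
The paper contains no proof of this proposition: it is imported verbatim from Gun, Murty, and Rath \cite{ChowlaMilnor}, so there is no internal argument to compare yours against. Your proof is essentially the argument from that reference, and it is correct. The distribution relation $\sum_{m=1}^{4}\zeta(q,m/4)=4^{q}\zeta(q)$ together with $\zeta(q,1/2)=(2^{q}-1)\zeta(q)$ and $\zeta(q,1)=\zeta(q)$ gives $\zeta(q,1/4)+\zeta(q,3/4)=(4^{q}-2^{q})\zeta(q)$; the difference is $4^{q}L(q,\chi_{-4})$; and for odd $q=2n+1$ the classical evaluation of $L(2n+1,\chi_{-4})$ as a nonzero rational multiple of $\pi^{2n+1}$ (via Euler numbers; e.g.\ $L(3,\chi_{-4})=\pi^{3}/32$) turns $\mathrm{V}_{2n+1}(4)$ into $\mathrm{span}_{\mathbb{Q}}\{\zeta(2n+1),\pi^{2n+1}\}$, so two-dimensionality is exactly the irrationality of $\zeta(2n+1)/\pi^{2n+1}$. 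Your positivity argument for the nonvanishing of $L(s,\chi_{-4})$ on the real axis is fine and is the step most proofs gloss over. One caveat you should make explicit rather than leave implicit: the equivalence holds only with $q$ ranging over \emph{odd} integers $2n+1\geq 3$. For even $q=2n$ the same computation shows that the Chowla--Milnor conjecture for $\mathrm{V}_{2n}(4)$ is instead equivalent to the irrationality of $L(2n,\chi_{-4})/\pi^{2n}$ (for $q=2$, Catalan's constant divided by $\pi^{2}$), a different and independent open problem; so the proposition as worded in the paper, with an unquantified $q$, is correct only when read as ``for each $n\geq 1$, the Chowla--Milnor conjecture for $\mathrm{V}_{2n+1}(4)$ holds if and only if $\zeta(2n+1)/\pi^{2n+1}$ is irrational.'' You do restrict to odd $q$ at the right moment, so your argument establishes the intended statement.
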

In relation to Theorem \ref{MainProposition}, the falsehood of the Chowla--Milnor conjecture allows for the rationality of certain values of $\frac{\zeta(2n+1)}{\pi^{2n+1}}$, which relates to Conjecture \ref{FurduiSintConjecture} of Furdui and S{\^\i}nt{\u{a}}m{\u{a}}rian. On the other hand, proving the Chowla--Milnor conjecture would only eliminate one possible way of expressing $\zeta(2)\zeta(3)$ as a rational multiple of $\zeta(5)$. More work must be done to establish a stronger implication between these related conjectures. Nevertheless, one may conjecture that $\zeta(2)\zeta(3) \not\in \mathrm{span}_\mathbb{Q} (\mathcal{A})$; however, verifying this is difficult.

Perhaps we can learn more by computing the closed form for
$$\sum_{k=1}^\infty \frac{H_k H_{k+1}\cdots H_{k+q}}{k(k+1)\cdots (k+q)}, \quad (q \in \mathbb{N}).$$

\bibliographystyle{unsrt}
\bibliography{references.bib}

\begin{thebibliography}{10}

\bibitem{Euler1}
Leonhard Euler.
\newblock Meditationes circa singulare serierum genus.
\newblock \url{https://scholarlycommons.pacific.edu/euler-works/477}, 1776.
\newblock Euler Archive - All Works, E477.

\bibitem{SharpMathAnal}
Alina S{\^\i}nt{\u{a}}m{\u{a}}rian and Ovidiu Furdui.
\newblock {\em Sharpening mathematical analysis skills}.
\newblock Springer, New York, 2021.

\bibitem{ExplicitEulerSum}
Junesang Choi and Hari~M Srivastava.
\newblock {Explicit evaluation of Euler and related sums}.
\newblock {\em Ramanujan J.}, 10(1):51--70, 2005.

\bibitem{AssociatedFun}
Hari~M Srivastava and Junesang Choi.
\newblock {\em Series associated with the zeta and related functions}, volume 530.
\newblock Springer Science \& Business Media, 2001.

\bibitem{FlajoletEuler}
Philippe Flajolet and Bruno Salvy.
\newblock Euler sums and contour integral representations.
\newblock {\em Exp. Math.}, 7(1):15--35, 1998.

\bibitem{Sofo}
Anthony Sofo and Amrik~Singh Nimbran.
\newblock {Euler sums and integral connections}.
\newblock {\em Mathematics}, 7(9):833, 2019.

\bibitem{LogIntHarmSeriesBook}
Ali~Shadhar Olaikhan.
\newblock {\em An Introduction To the Harmonic Series And Logarithmic Integrals-For High School Students Up To Researchers}.
\newblock Independent publisher, Phoenix, AZ, 2023.

\bibitem{Xu}
Ce~Xu and Yulin Cai.
\newblock {On harmonic numbers and nonlinear Euler sums}.
\newblock {\em J. Math. Anal. Appl.}, 466(1):1009--1042, 2018.

\bibitem{XuEuler}
Ce~Xu, Yuhuan Yan, and Zhijuan Shi.
\newblock {Euler sums and integrals of polylogarithm functions}.
\newblock {\em J. Number Theory}, 165:84--108, 2016.

\bibitem{XuEulerSums2}
Ce~Xu and Weiping Wang.
\newblock {Two variants of Euler sums}.
\newblock {\em Monatsh. Math.}, 199(2):431--454, 2022.

\bibitem{XuEulerSums3}
Ce~Xu.
\newblock {Explicit evaluations for several variants of Euler sums}.
\newblock {\em Rocky Mt. J. Math.}, 51(3):1089--1106, 2021.

\bibitem{XuEulerSums4}
Ce~Xu and Weiping Wang.
\newblock {Dirichlet type extensions of Euler sums}.
\newblock {\em C. R. Math.}, 361(G6):979--1010, 2023.

\bibitem{CombAlgMZV}
Douglas Bowman and David~M Bradley.
\newblock The algebra and combinatorics of shuffles and multiple zeta values.
\newblock {\em Journal of Combinatorial Theory, Series A}, 97(1):43--61, 2002.

\bibitem{AlgebraMZV}
Michael~E Hoffman.
\newblock Algebraic aspects of multiple zeta values.
\newblock In {\em Zeta functions, topology and quantum physics}, pages 51--73. Springer, 2005.

\bibitem{ZhaoMZV}
Jianqiang Zhao.
\newblock {\em Multiple zeta functions, multiple polylogarithms and their special values}, volume~12.
\newblock World Scientific, 2016.

\bibitem{Borwein}
David Borwein and Jonathan~M Borwein.
\newblock On an intriguing integral and some series related to $\zeta(4)$.
\newblock {\em Proc. Am. Math. Soc.}, 123(4):1191--1198, 1995.

\bibitem{FurduiValean}
Cornel~Ioan V{\u{a}}lean and Ovidiu Furdui.
\newblock {Reviving the quadratic series of Au-Yeung}.
\newblock {\em J. Class. Anal.}, 6(2):113--118, 2015.

\bibitem{Valean3}
Cornel~Ioan V{\u{a}}lean.
\newblock A master theorem of series and an evaluation of a cubic harmonic series.
\newblock {\em J. Class. Anal.}, 10:97--107, 2017.

\bibitem{EulerStirling}
Weiping Wang and Yanhong Lyu.
\newblock {Euler sums and Stirling sums}.
\newblock {\em J. Number Theory}, 185:160--193, 2018.

\bibitem{FurduiProduct2Series}
Ovidiu Furdui.
\newblock Series involving products of two harmonic numbers.
\newblock {\em Math. Mag.}, 84(5):371--377, 2011.

\bibitem{FurduiGazetaMat}
Ovidiu Furdui and Alina S{\^\i}nt{\u{a}}m{\u{a}}rian.
\newblock A series involving a product of three consecutive harmonic numbers.
\newblock {\em Gazeta Matematica, Seria A}, 41(120):35--38, 2023.

\bibitem{Apery}
Roger Ap{\'e}ry.
\newblock Irrationalit{\'e} de $\zeta$ (2) et $\zeta$ (3).
\newblock {\em Ast{\'e}risque}, 61:11--13, 1979.

\bibitem{Euler2}
Leonhard Euler.
\newblock De summis serierum reciprocarum.
\newblock \url{https://scholarlycommons.pacific.edu/euler-works/41}, 1740.
\newblock Euler Archive - All Works, E41.

\bibitem{ApostolIntro}
Tom~M. Apostol.
\newblock {\em Introduction to analytic number theory}.
\newblock Springer Science \& Business Media, 2013.

\bibitem{Nahin}
Paul~J. Nahin.
\newblock {\em In Pursuit of Zeta-3: The World's Most Mysterious Unsolved Math Problem}, chapter~1, pages 1--74.
\newblock Princeton University Press, 2021.

\bibitem{Finch}
Steven~R. Finch.
\newblock {\em {Mathematical constants}}.
\newblock Cambridge university press, Cambridge, England, 2003.

\bibitem{ChowlaMilnor}
Purusottam~Rath Sanoli~Gun, M. Ram~Murty.
\newblock On a conjecture of chowla and milnor.
\newblock {\em Canad. J. Math.}, 63(6):1328--1344, 2011.

\end{thebibliography}

\end{document}